\newcommand{\R}{\mathbb{R}}
\newcommand{\C}{\mathbb{C}}
\newcommand{\MV}{{\mathcal V}}
\newcommand{\MF}{{\mathcal F}}
\newcommand{\MM}{{\mathcal M}}
\newcommand{\MS}{{\mathcal S}}
\renewcommand{\emph}[1]{{\itshape{#1}}}
\newcommand{\ii}{{\mathrm{i}}}
\let\Re\relax
\DeclareMathOperator{\Re}{Re}
\renewcommand{\epsilon}{\varepsilon}
\DeclareMathOperator{\Area}{Area}
\let\Re\relax
\DeclareMathOperator{\Re}{{\mathrm Re}}
\newcommand{\pd}{\partial}
\newcommand{\inner}[2]{{\left\langle{#1}, {#2}\right\rangle}}
\newcommand{\nai}{\langle}
\newcommand{\seki}{\rangle}
\numberwithin{equation}{section}
\newtheorem{Theorem}{Theorem}[section]
\newtheorem{Corollary}[Theorem]{Corollary}
\newtheorem{Lemma}[Theorem]{Lemma}
\theoremstyle{definition}
\newtheorem{Definition}[Theorem]{Definition}
\newtheorem{Remark}[Theorem]{Remark} }
\begin{document}
\allowdisplaybreaks

\renewcommand{\thefootnote}{}

\newcommand{\arXivNumber}{2307.08537}

\renewcommand{\PaperNumber}{034}

\FirstPageHeading

\ShortArticleName{A Weierstrass Representation Formula for Discrete Harmonic Surfaces}

\ArticleName{A Weierstrass Representation Formula \\ for Discrete Harmonic Surfaces\footnote{This paper is a~contribution to the Special Issue on Differential Geometry Inspired by Mathematical Physics in honor of Jean-Pierre Bourguignon for his 75th birthday. The~full collection is available at \href{https://www.emis.de/journals/SIGMA/Bourguignon.html}{https://www.emis.de/journals/SIGMA/Bourguignon.html}}}

\Author{Motoko KOTANI~$^{\rm a}$ and Hisashi NAITO~$^{\rm b}$}
\AuthorNameForHeading{M.~Kotani and H.~Naito}

\Address{$^{\rm a)}$~The Advanced Institute for Materials Research (AIMR), Tohoku University, Japan}
\EmailD{\href{mailto:motoko.kotani.d3@tohoku.ac.jp}{motoko.kotani.d3@tohoku.ac.jp}}

\Address{$^{\rm b)}$~Graduate School of Mathematics, Nagoya University, Japan}
\EmailD{\href{mailto:naito@math.nagoya-u.ac.jp}{naito@math.nagoya-u.ac.jp}}

\ArticleDates{Received July 17, 2023, in final form April 12, 2024; Published online April 17, 2024}

\Abstract{A discrete harmonic surface is a trivalent graph which satisfies the balancing condition in the $3$-dimensional Euclidean space and achieves energy minimizing under local deformations. Given a topological trivalent graph, a holomorphic function, and an associated discrete holomorphic quadratic form, a version of the Weierstrass representation formula for discrete harmonic surfaces in the $3$-dimensional Euclidean space is proposed. By using the formula, a smooth converging sequence of discrete harmonic surfaces is constructed, and its limit is a classical minimal surface defined with the same holomorphic data. As an application, we have a discrete approximation of the Enneper surface.}

\Keywords{discrete harmonic surfaces; minimal surfaces; Weierstrass representation formula}

\Classification{53A70; 53A10; 52C26}

\begin{flushright}
\begin{minipage}{75mm}
\it On the occasion of Jean-Pierre Bourguignon'\\ 75th birthday
 \end{minipage}
\end{flushright}

\renewcommand{\thefootnote}{\arabic{footnote}}
\setcounter{footnote}{0}

\section{Introduction}
The purpose of the present paper is to construct a discrete version of the well known
{\em Weierstrass representation formula} for a minimal surface and to show its applications.
A minimal surface is defined as a minimizing surface of the area functional under local variations
and a central research object in geometric analysis.
Minimal surfaces are seen everywhere in nature and also used in engineering products
because they enjoy both effectiveness and natural beauty at the same time.

Recent years, various notions of {\em discrete surfaces} have been proposed.
The {\em triangularizations/polygonalizations} of a topological surface have been traditionally used
both in pure and applied mathematics and been proved useful in history.
Discretization of differential geometric surface in geometric analysis by U.~Pinkall and K.~Polthier
\cite{bobenko-pinkall, pinkall-polthier}, of integrable systems lead by A.~Bobenko et al.\
\cite{bobenko-bucking-sechelmann, bobenko-hoffmann, bobenko-pottmann-wallner}
are among those challenges.
The present paper is based on the {\em discrete surface theory} introduced in \cite{kotani-naito-omori}.
The notion of discrete surface to be harmonic and the {\em balancing condition} are proposed there.
The condition for discrete surface to be minimal under the area variational formula is also given.
It should be noted that isometry is a too rigid property in discrete surface theory while those are important in classical surface theory.
Surfaces with harmonicity are much richer than minimal surfaces,
area minimizing surfaces among isometric surfaces in the Euclidean space, to be studied in the discrete case.
We call discrete surfaces in the Euclidean space minimizing the energy functional,
and satisfying the balancing condition as a consequence {\em discrete harmonic surfaces}.

The convergence of a sequence of subdivided discrete surfaces of a given discrete surface is studied in \cite{kotani-naito-tao, tao}.
Although we proved the energy monotonicity formula in the convergence,
the limit {\em surface} has singularities in general.
We found there the balancing condition plays an important role to control the regularity of the limit surface.
As an application of the Weierstrass representation formula is to establish a method to analyze the singularities for a good class of discrete surfaces.
We expect we should have a better control in the case of minimal surfaces or harmonic surfaces which has a ``Weierstrass representation formula'' given by holomorphic data.

In order to establish a kind of {\em Weierstrass representation formula} for discrete harmonic surfaces,
we need a notion of {\em conformal} structures of a discrete surface,
and holomorphic quadratic differentials with respect of the conformal structure.
A notion of conformal structure for triangularizations of a topological surface is introduced by
W.~Thurston \cite{thurston} motivated in discrete approximation of the Riemannian mapping between complex surfaces.
He noticed a map between two surfaces preserving their circle packings can be used as a discrete conformal map
because they hold circles and contact angles.
The notion develops discrete Riemann mapping theorem \cite{rodin-sullivan} between complex surfaces
and the discrete complex function theory (cf.\ \cite{circle-packing}).
On the other hand, the notion of holomorphicity and holomorphic differential were introduced to study discrete surfaces
by U.~Pinkall and W.Y.~Lam in \cite{lam-pinkall} and have been used for the study of minimal surfaces by \cite{lam, lam2}.

By adapting those notions to discrete harmonic surfaces in our setting,
we are successful to construct a Weierstrass representation formula for discrete harmonic surfaces with the data of their Gauss map and holomorphic quadratic differentials and construct a converging sequence of discrete harmonic surfaces, which are obtained by iterating subdivision of a given discrete harmonic surface.

\begin{theorem*}[Weierstrass representation formula for a discrete harmonic surface, Theorem~\ref{claim:weierstrass}]
 Given a topological planer trivalent graph $M=(M,V,E)$, where $V$ is the set of vertices, and~$E$ is the set of edges.
 With respect to a complex coordinate $z\colon M \to \C$ for a holomorphic function~$g$ on~$\C$
 and a holomorphic quadratic differential $q$ on $M$ associated with $g$,
 let us define a~map~${X\colon M \to \R^3}$
 \begin{displaymath}
 X = \int \Re\, {\rm d}F,
 \end{displaymath}
 with
 \begin{align}
 {\rm d}F(e_{ab}) &= \frac{\ii q(e_{ab})}{{\rm d}g(e_{ab})}(1-g_ag_b, \ii(1+g_ag_b), g_a+g_b)\nonumber\\
 &=-\overline{\tau}(e_{ab}) (1-g_ag_b, \ii(1+g_ag_b), g_a+g_b), \qquad e_{ab} \in E,\label{eq:weierstrass}
 \end{align}
which connects the vertices $v_a$ and $v_b$ in $V$. It defines a discrete harmonic surface in $\R^3$.
 The above equation is called Weierstrass representation formula for a discrete harmonic surface.
\end{theorem*}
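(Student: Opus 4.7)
My plan is to verify, in order, three properties of the candidate edge 1-form
\[
dF(e_{ab}) \;=\; -\overline{\tau}(e_{ab})\,(1-g_ag_b,\,\ii(1+g_ag_b),\,g_a+g_b),
\]
namely: (a) $dF$ is a well-defined, edge-antisymmetric $\C^3$-valued discrete 1-form; (b) its real part is closed around every face of $M$, so that $X=\int \Re\,dF$ is well-defined up to a global translation; (c) at every vertex the resulting positions satisfy the balancing condition, which by the discrete surface theory recalled in the introduction \cite{kotani-naito-omori} is exactly the Euler--Lagrange equation for the energy, making $X$ a discrete harmonic surface.

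First I would check edge-antisymmetry. The vector $(1-g_ag_b,\ii(1+g_ag_b),g_a+g_b)$ is manifestly symmetric in the endpoints $a,b$, so the requirement that $dF(e_{ab})=-dF(e_{ba})$ must come from $\overline{\tau}(e_{ab})=\ii q(e_{ab})/dg(e_{ab})$. Since both $q$ and $dg$ are discrete 1-forms in the Lam--Pinkall sense, each changes sign under edge reversal, and I would record that their quotient is antisymmetric, identifying $dF$ as a bona fide $\C^3$-valued discrete 1-form.

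Next, the closedness of $\Re\,dF$ around each face $f$ is the technical heart of the argument. Expanding $\sum_{e_{ab}\in\partial f} dF(e_{ab})$ componentwise and substituting $\overline{\tau}=\ii q/dg$ together with $dg(e_{ab})=g_b-g_a$, the third component becomes $-\sum_{e_{ab}\in\partial f} (g_a+g_b)\,\overline{\tau}(e_{ab})$; I would rewrite this as a cyclic sum involving $q(e_{ab})\cdot(g_a+g_b)/(g_b-g_a)$ and show that the hypothesis ``$q$ is a holomorphic quadratic differential associated with $g$'' is precisely what makes this sum vanish. The first and second components I would handle in parallel by grouping the $1\pm g_ag_b$ terms and using the same holomorphicity identity; the symmetric combination $(1-g_ag_b)\pm\ii^{\,2}(1+g_ag_b)$ factors through $(g_a,g_b)$ in a way that produces the same telescoping pattern.

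Finally, at each vertex $v$ the balancing condition $\sum_{b\sim v}\Re\,dF(e_{vb})=0$ has to be verified separately (for a trivalent graph this is a three-term vector identity). I would reduce it to a vertex-star analogue of the face identity by applying the cocycle/holomorphic property of $q$ localized at $v$. The main obstacle I anticipate is step (b): the argument stands or falls on matching the precise discrete Cauchy--Riemann equations for $q$ (the ``association with $g$'') to the algebraic cancellation pattern generated by the vectorial prefactor $(1-g_ag_b,\ii(1+g_ag_b),g_a+g_b)$, so I would organize the calculation so that all three components reduce to a single scalar identity that is exactly the defining equation of the holomorphic quadratic differential.
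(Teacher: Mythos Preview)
Your step (a) contains a sign slip: since both $q$ and $dg$ reverse sign under edge reversal, their quotient $\ii q/dg$ is \emph{symmetric}, not antisymmetric, and indeed the paper records $\tau(e_{ab})=\tau(e_{ba})$. So $dF$ as written is symmetric in the endpoints; this is a notational wrinkle you would have to sort out, but it is not the real obstacle.

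The genuine gap is in (b). You identify closedness of $\Re\,dF$ around each face as the ``technical heart'' and propose to derive it from the holomorphicity of $q$. But the two defining conditions of a holomorphic quadratic differential associated with $g$, namely $\sum_{e\in E_v}q(e)=0$ and $\sum_{e\in E_v}\tau(e)=0$, are \emph{vertex-star} identities; they give no control over sums around faces, and there is no mechanism by which the telescoping you describe would close up on a generic polygon. The paper does not attempt to prove face-closedness as part of this theorem at all: it takes $X=\int\Re\,dF$ as given and defers the closedness check to a direct computation for the regular hexagonal lattice in the Enneper section. Your plan to extract (b) from the hypotheses as stated will not go through.

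What the paper actually proves for the theorem is precisely your step (c), and by a different route than you outline. Rather than a componentwise reduction, it first establishes the geometric lemma
\[
\frac{dF(e_{ab})}{q(e_{ab})}=c_{ab}\bigl(N_a\times N_b+\ii(N_b-N_a)\bigr),
\]
with $N$ the stereographic lift of $g$, then computes $\langle N_a,\Re\,dF(e_{ab})\rangle=\ii(\overline{q}(e_{ab})-q(e_{ab}))$ and sums over $b\sim a$ using $\sum_e q(e)=0$. Your componentwise plan for (c) would in fact also succeed, and arguably gives a little more: from $\sum_b\overline{\tau}(e_{ab})=0$ together with $\sum_b q(e_{ab})=0$ one obtains $\sum_b\overline{\tau}(e_{ab})g_b=0$, whence all three components of $\sum_b dF(e_{ab})$ vanish in $\C^3$, yielding the full balancing identity $\sum_b \Re\,dF(e_{ab})=0$ rather than only its normal component. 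So redirect your effort from (b), which is not derivable from the stated hypotheses, to a clean execution of (c).
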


As an application of the theorem, we have
\begin{theorem*}[Theorem \ref{homothety} and Corollary \ref{claim:homothety:cor}]
 A smoothly converging sequence of discrete harmonic surfaces $X_n \colon M_n \to \R^3$
 with a given initial discrete harmonic surface $X_0\colon M_0 \to \R^3$ with the holomorphic data $g$ and $q$ is constructed,
 where $M_n$ are iterated subdivisions of $M_0$ in $\C$.
 The limit is a classical minimal surface given by the classical Weierstrass representation with the holomorphic data.
\end{theorem*}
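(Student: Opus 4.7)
The strategy is to apply the discrete Weierstrass formula (the first theorem) on each subdivision $M_n$ and then show that the resulting sequence $X_n$ converges smoothly to the classical minimal surface $X\colon \C \to \R^3$ obtained from the classical Weierstrass representation with the same holomorphic data $(g,q)$. Since the subdivisions $M_n$ sit in $\C$, the holomorphic function $g$ and the holomorphic quadratic differential $q$ restrict coherently to each $M_n$, so the discrete map $X_n$ is well-defined and discrete harmonic by the first theorem. The problem therefore reduces to proving that as the mesh size $h_n := \max_{e_{ab}\in E(M_n)}|z_b-z_a|$ tends to zero, the maps $X_n$ converge to $X$ in an appropriate smooth sense.

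For pointwise convergence, fix a base vertex and express $X_n$ as a telescoping sum $X_n(v) = X_n(v_0) + \sum_{e_{ab}\in\gamma} \Re\, dF(e_{ab})$ along a path $\gamma$ from $v_0$ to $v$ in $M_n$. The classical representation gives $X(z) = X(z_0) + \Re \int_{z_0}^{z}\omega$, where $\omega = -\overline{h}(1-g^2,\ii(1+g^2),2g)\,d\bar z$ with $q = h\,dg\,dz$ (or the analogous coefficient arising from the comparison with \eqref{eq:weierstrass}). Using Taylor expansion of $g$ on each edge, one checks that
\begin{displaymath}
  1 - g_a g_b = 1 - g(z_a)g(z_b), \quad g_a+g_b = g(z_a)+g(z_b)
\end{displaymath}
are midpoint-symmetric approximations of $1-g^2$ and $2g$ with error $O(|z_b-z_a|^2)$, and $q(e_{ab})/dg(e_{ab})$ approximates the continuous coefficient to the same order. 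Hence each summand matches the corresponding arc integral up to $O(h_n^2)\cdot|z_b-z_a|$, and summing over a path with total length bounded on compact sets gives $X_n \to X$ uniformly on compacta.

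To upgrade this to smooth convergence, I would interpret $X_n$ as a discrete object over the planar graph $M_n$ and compare its discrete difference operators with the derivatives of $X$. Because the discrete differential $dF$ is given by an explicit holomorphic expression in $g$ and $q$, discrete directional derivatives $(X_n(v_b)-X_n(v_a))/(z_b-z_a)$ inherit Taylor expansions from those of $g$ and $q$; iterating this argument, higher-order discrete differences along chains of edges converge to higher-order derivatives of $X$. This yields $C^k$ convergence of the piecewise linear interpolations of $X_n$ to $X$ on compact parameter domains, provided the combinatorics of the subdivisions $M_n$ remain uniformly regular (bounded aspect ratios and comparability of edge lengths), which holds for standard iterated subdivision schemes.

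The main obstacle is precisely this regularity issue: controlling higher-order discrete derivatives requires that the subdivision $M_n$ remain quasi-uniform, so that the discrete geometric quantities (edge vectors, face normals, discrete Laplacians of $X_n$) converge without concentration. Once this is secured, the convergence of Gauss maps follows directly from the pointwise convergence of $g_a$ along the mesh, and the limit surface is automatically minimal because $(g, q)$ satisfy the classical Weierstrass conditions. The remaining verification---that the limit $X$ satisfies the minimal surface equation rather than merely the harmonic map equation---is then immediate from the identification of its Weierstrass data.
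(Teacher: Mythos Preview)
Your approach is broadly sound but differs from the paper's in two notable ways. First, on the construction of the data: you assert that ``$g$ and $q$ restrict coherently to each $M_n$,'' but in this paper $q$ is a \emph{discrete} $1$-form living on the edges of $M_0$, not a continuous object on $\C$; the edges of $M_n$ for $n>0$ are new, so there is nothing to restrict. The paper handles this by observing (Lemma~\ref{claim:lemma:8:1}) that on a trivalent graph the discrete holomorphic quadratic differential associated with a given $g$ is unique up to a scalar, so one \emph{constructs} $q_n$ on each $M_n$ from $g$ alone, chooses the scalars consistently, and only then identifies the common limit as a continuous holomorphic $q$ on $\C$. Your proposal implicitly assumes this step is already done; it should be made explicit, since otherwise ``the same $q$'' on $M_n$ is not defined.

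Second, on the convergence argument itself: rather than your direct Taylor-expansion comparison of edge sums with arc integrals and the accompanying regularity discussion, the paper exploits a structural shortcut. Taking the Goldberg--Coxeter subdivision twice at each step makes $M_{n+1}$ a homothety of $M_n$ with multiplier $1/4$, so the whole sequence reduces to the regular hexagonal case $M_{\lambda_n}$ with $\lambda_n = 4^{-n}\lambda$ treated in Theorem~\ref{homothety}. There the convergence of the pseudo Gauss map follows from the identity $\langle N_a, dF(e_{ab})\rangle = \overline{\tau}(e_{ab})(z_a-z_b)\to 0$, and convergence of the mean and Gauss curvatures is argued simply by noting that the discrete first and second fundamental forms are finite-difference analogues of the classical ones. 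Your quantitative $O(h_n^2)$ edge estimates and the quasi-uniformity hypothesis are more than the paper actually uses; the homothety structure removes the need for a general mesh-regularity assumption. Both routes lead to the same conclusion, but the paper's is shorter and more specific to the Goldberg--Coxeter setting, while yours would apply to a broader class of subdivisions at the cost of the extra analysis you outline.
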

The limit surface is considered as a hidden continuous object in a given discrete surface.
We expect the method is useful to draw minimal surfaces in computer graphics.
Actually, in the last section, we show as an example a sequence of discrete harmonic surfaces which converges to the Enneper surface, a famous classical minimal surface.

\section{Minimal surfaces in differential geometry}

In geometric analysis,
minimal surfaces have been studied intensively with variational methods.
A minimal surface is a solution of the Euler--Lagrange equation to the area functional.
To be more precise,
for a given isometric immersion $X\colon M \to \R^3$ of a smooth surface $M$ with the unit normal vector field $N\colon M \to S^2$,
consider its smooth deformation
\begin{gather*}
 X_t = X +tN \qquad \text{for}\quad t \in (-\epsilon, \epsilon).
\end{gather*}
Because the {\em area variation formula} is expressed as for $A(t)= \Area(X_t)$, we have the {\em Steiner formula}\label{steinerformula}
\begin{gather*}
 {\rm d}A(t) = \big(1 -2tH+ 4t^2 K +O\big(t^3\big)\big){\rm d}A
\end{gather*}
for a local variation $X_t$ of a given surface $X_0$ with $H$ and $K$ as
its {\em mean curvature} and {\em Gauss curvature}, a minimal surface is characterized to be a surface with the mean curvature $H =0$.

It is known that a differential surface admits an isothermal coordinate $(u,v)$ with which the first fundamental from is expressed as
\begin{gather*}
 I ={\rm e}^{2\lambda}\big({\rm d}u^2+ {\rm d}v^2\big).
\end{gather*}
With respect the complex coordinate $z=u+\ii v$; the Laplacian is given as
\begin{gather*}
 \Delta = {\rm e}^{-2\lambda} \left( \frac{\pd^2}{\pd u^2} + \frac{\pd^2}{\pd v^2}\right).
\end{gather*}
The structure equations for a smooth surface $X\colon M \to \R^3$ with $N$ the unit normal vector field
and $Q = \nai X_{zz}, N \seki$ are given
\begin{Theorem}[structure equations for a smooth surface]
 \[
 X_{z \overline{z}}= \frac{1}{4} {\rm e}^{2\lambda}H N, \qquad
 X{zz} = 2 \lambda_{z}X_z + QN, \qquad
 N_z = -2 {\rm e}^{-2\lambda}Q X_{\overline{z}} - \frac{1}{2} HX_z,
 \]
 where $H$ is the mean curvature of $X$.
 The integrability conditions are given for $Q$, $H$ and $\lambda$
 \[
 \lambda_{z\overline{z}}=-{\rm e}^{-2\lambda} Q \overline{Q} + \frac{1}{16} {\rm e}^{2\lambda}H^2 =0, \qquad
 4Q_{\overline{z}} = {\rm e}^{2\lambda} H_z.
 \]
\end{Theorem}
When $M$ is a minimal surface, the above equations are given
\[
 X_{z\overline{z}}=0,\qquad
 X_{zz} = 2\lambda_z X_z + QN,\qquad
 N_z = -2{\rm e}^{-2\lambda}Q X_{\overline{z}}.
\]
That indicates $X$ is a vector valued harmonic function and
$X_z$ is a vector valued holomorphic function satisfying
\begin{displaymath}
 \nai X_z, X_z \seki = 0.
\end{displaymath}
Additionally, $Q{\rm d}z{\rm d}z$ is a holomorphic quadratic differential,
due to $X_{z\overline{z}}=0$.

The Weierstrass representation for a minimal surface is given as
\begin{Theorem}[Weierstarss representation formula for a minimal surface]
 With a holomorphic quadratic differential $Q$ and a meromorphic function $g$, let us define
 a vector valued function~${X\colon M \to \R^3}$
 \begin{displaymath}
 X = \Re \int \frac{Q}{g_z}{\rm d}z
 \left(
 \frac{1}{2}\big(1-g^2\big), \frac{\ii}{2}\big(1+g^2\big), g
 \right)
 \in \R^3.
 \end{displaymath}
 It is a minimal surface with the Gauss map
 \begin{displaymath}
 N = \frac{1}{1+|g|^2}\left(g+\overline{g}, \ii (\overline{g}-g), |g|^2-1 \right) \in S^2.
 \end{displaymath}
\end{Theorem}

Moreover, the 1-parameter family
\begin{displaymath}
 X(\theta) = \Re \int {\rm e}^{\ii \theta}\frac{Q}{g_z}{\rm d}z
 \left(
 \frac{1}{2}\big(1-g^2\big), \frac{\ii}{2}\big(1+g^2\big), g
 \right)
 \in \R^3
\end{displaymath}
is proved to be a family of minimal surfaces and called an {\em associated family} of $X_0$.
The minimal surface $X(\pi/2)$ is called the conjugate minimal surface of $X_0$.

\section{Basic knowledge of discrete surfaces}
Let us review some results on discrete surfaces in \cite{kotani-naito-omori, kotani-naito-tao, tao} that we used in the present paper.

\begin{Definition}[discrete surface \cite{kotani-naito-omori}]
 Consider a trivalent graph $M=(M, V, E)$ with the set~${V=V(M)}$ of vertices and the set $E=E(M)$ of oriented edges of $M$,
 and a non-degenerate vector field $N\colon M \to S^2$.
 Namely, $N$ takes different values for vertices next to each other.
 Its realization $X\colon M \to \R^3$ is called a {\em discrete surface} in $\R^3$ with $N$ as its Gauss map,
 when it has a non-degenerate corresponding tangent vector space of each vertex determined
 by the three nearest vertices of the vertex whose unit normal vector is equals to $N$.
\end{Definition}

Because a discrete surface does not have a Riemannian metric,
we define the first fundamental form and the second fundamental form at each vertex is
the weighted average over those of triangles surrounding the vertex. To be more precise,
for each vertex $v \in V$, let us denote the set $E_v$ of edges emerging at $v$
\begin{gather*}
 E_v = \{ e \in E \mid o(e ) =v \},
\end{gather*}
and the adjacency vertices of $v$, $v_1$, $v_2$, and $v_3$.
Then in $\R^3$
\begin{gather*}
 X_1 = X (v_1),
 \qquad
 X_2 = X (v_2),
 \qquad
 X_3 = X (v_3)
\end{gather*}
are the adjacency vertices of $X_0= X(v_0)$,
and the unit normal vectors at $X_0$, $X_1$, $X_2$, and $X_3$ are denoted by $N_0$, $N_1$, $N_2$ and $N_3$, respectively.

Consider the triangle $\triangle_{0,a,b}$ formed by the three vertices $X_0$, $X_a$, and $X_b$,
for $a,\,b \in \{1,2,3\}$ with $a\neq b$, and define its first and second fundamental formulas
 \begin{align*}
 & I_{ab}
=
 \begin{pmatrix}
 \inner{X_a-X_0}{X_a-X_0} & \inner{X_a-X_0}{X_b-X_0} \\
 \inner{X_b-X_0}{X_a-X_0} & \inner{X_b-X_0}{X_b-X_0}
 \end{pmatrix},
 \\
 & I\!I_{ab}
 =
 \begin{pmatrix}
 \inner{X_a-X_0}{N_a-N_0} & \inner{X_a-X_0}{N_b-N_0} \\
 \inner{X_b-X_0}{N_a-N_0} & \inner{X_b-X_0}{N_b-N_0}
 \end{pmatrix},
 \end{align*}
and $H_{ab}$ and $K_{ab}$ by the trace of the matrix $I_{ab}^{-1}I\!I_{ab}$ as are in the classical case.

Now we define the mean curvature $H(v_0)$ and the Gauss curvature $K(v_0)$ at $v_0$
by their weighted average by the areas of the triangles,
i.e.,
\[
 H(v_0) := \frac{1}{A(v_0)}\sum_{a,b} \sqrt{\det I_{ab}}H_{ab},
 \qquad
 K(v_0) := \frac{1}{A(v_0)}\sum_{a,b} \sqrt{\det I_{ab}}K_{ab},
 \]
where $A(v_0)$ is the sum of the area of the three triangles.
See \cite{kotani-naito-omori} for the details.

It should be noted that isometry of triangularizations is rigid,
and \cite{lam-pinkall} pointed out isometry is too strong requirement to be satisfied
to develop rich discrete surface theory. Actually, in our setting as well, we have the following.

\begin{Theorem}[conditions for discrete minimal surfaces, \cite{kotani-naito-omori}]
 The conditions for being a minimal surface is
 \begin{displaymath}
 \nai X_a-X_0, X_b- X_0\seki = \nai X_b-X_0, X_c- X_0\seki= \nai X_c-X_0, X_a- X_0\seki,
 \end{displaymath}
 where $a,b,c =1,2,3$.
\end{Theorem}
That indicates a local structure of minimal surfaces at each vertex is the identical,
i.e., trivalent with equal length and the same angle $2\pi/3$ between two of them.
The class is rather too rigid.

On the other hand, harmonic realizations of a discrete surface seem to be richer and useful to develop geometry.
\begin{Definition}[balancing condition and harmonic surface]
 When $X\colon M \to \R^3$ satisfies the {\em balancing conditions} at each vertex;
 \begin{equation}
 \label{eq:balancing}
 \sum_{e \in E_v}{\rm d}X(e) = (X_1-X_0) +(X_2-X_0) + (X_3-X_0) =0, \qquad v \in E,
 \end{equation}
 where $X_a = X (v_a)$ for $a=1,2,3$ are the adjacency vertices,
 we call $X$ a {\em harmonic realization} and $X(M)$ a {\em harmonic surface}.
\end{Definition}
We have proved a harmonic surface achieves minimizing of the energy function
\begin{displaymath}
 \mathbb{E} = \frac{1}{2}\sum_{u\sim v} \|X_u-X_v\|^2
\end{displaymath}
under local deformation \cite{kotani-naito-omori}.

\section[Construction of a sequence of iterated subdivisions of a given discrete surface]{Construction of a sequence of iterated subdivisions\\ of a given discrete surface}

One of the major interests of the discrete surface theory is to discover a continuum object hidden
in a discrete surface and study the relation between them.
A candidate of the continuum object is a converging limit,
if there exists, of a sequence of discrete surfaces obtained by iterated subdivisions of a discrete surface.

In \cite{kotani-naito-tao, tao}, for a given discrete surface, its subdivision is constructed.
Let us quickly review that.

Triangulations of a smooth surface has been useful in many aspect of mathematical study and it applications to problems in the real world.
In that case, a triangle lies on a plane or a~surface, and is subdivided by using the metric on it.
In our case, a discrete surface does not have a face which lies on a plane or a surface.
So the subdivision is not trivial.
We take two steps of the subdivisions process.
Namely, first we construct a topological subdivision and then realize it in $\R^3$ to have a subdivision of a discrete surface in $\R^3$.

A planar trivalent graph has a triangulation of a plane as its dual graph in the plane.
The triangulation is subdivided in the canonical way, and then take its dual to have a trivalent graph.
This is called the {\em Goldberg--Coxeter subdivision}.
We take it as a {\em topological subdivision},
and have a sequence of a topological discrete surface $M_n$ obtained by the iteration of this process for a~given discrete surface $M_0$.
\begin{figure}[hbtp]
 \centering
 \begin{tabular}{lll}
 a)
 &b)
 &c)
 \\
 \includegraphics[bb=0 0 133 133,width=100pt]{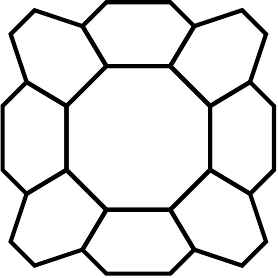}
 &\includegraphics[bb=0 0 133 133,width=100pt]{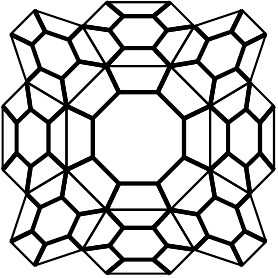}
 &\includegraphics[bb=0 0 133 133,width=100pt]{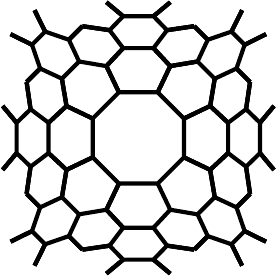}
 \end{tabular}
\caption{Goldberg--Coxeter subdivision.
 (a)~A part of trivalent graph consisting hexagons and an octagon. (b)~Take an $n$-gon inside each $n$-gon and then connect between the new vertex and original vertex. (c)~Remove original edges, we obtain the subdivision of~(a).}
\end{figure}

Now we take the second step to construct a sequence of discrete surfaces in $\R^3$.
For a given discrete surface $X_n$ in $\R^3$,
we take a part of it to have a planar graph $M_n$ in the plane,
and apply the Goldberg--Coxeter subdivision to have a new planar trivalent graph $M_{n+1}$.
We understand the process to be topological.
Next we realize $M_{n+1}$ in $\R^3$. When $X_n\colon M_n \to \R^3$ is given,
we consider it an a boundary condition of the equations whose solution should satisfy the balancing condition and to be $X_{n+1}$.

When the convergence of a sequence of iteratively subdivided discrete surfaces is discussed,
it is found that a naive subdivision obtained by solving the Dirichlet problem
with the original discrete surface as its the boundary condition does not work,
and its modification to make the subdivision to be a discrete harmonic,
i.e., that satisfies the balancing condition, is proved to be appropriate in the discussion of convergence \cite{tao}.

We have the following theorem for the convergence.

\begin{Theorem}[Tao \cite{tao}, Kotani--Naito--Tao \cite{kotani-naito-tao}]
 Let $\{X_{i} \}$ be a sequence of discrete harmonic surfaced iteratively constructed from $X_{0} = X$ as above.
 \begin{enumerate}\itemsep=0pt
 \item[$(1)$]
 The sequence $\{ X_n \}$ forms a Cauchy sequence in the Hausdorff topology
 and satisfies the energy monotonicity formula. % \cite{tao}.
 \item[$(2)$]
 The limit of the Cauchy sequence
 $\mathcal {M}_{\infty} = \overline{\bigcup M_i}$
 is divided into three kinds of sets:
 \begin{equation*}
 \mathcal {M}_{\infty} = \MM_{\MF} \cup \MM_{\MV} \cup \MM_{\MS}.
 \end{equation*}
 The first one $\MM_{\MF}$ is the set of accumulating points associated with each face in $M_i$.
 The second one is the set of all vertices, replaced as in the above step,
 i.e., \begin{math}
 \mathcal{M}_{\MV} = \bigcup_{i} M_i.
 \end{math}
 The third one $\MM_{\MS}$ is the set of the rest of the accumulating points.
 We know little about $\MM_{\MS}$ in general,
 however, we prove an un-branched discrete surface do not have such $\MM_{\MS}$.
 \end{enumerate}
\end{Theorem}
The regularity of the limit set is not trivial at all,
although we have the energy monotonicity formula.
That gives a motivation of the present paper.
When we have a representation formula with holomorphic data, we can expect to develop finer analysis of singularities.
%%%%%%%%%
%%%%%%%%%
%%%%%%%%%
\section{Discrete holomorphic quadratic differentials}
%%%%%%%%%
Let $M=(M, V, E)$ be a discrete surface with the set $V$ of vertices and the set $E$ of oriented edges of $M$.
By definition, $M$ is a trivalent graph. It has a non-degenerate unit normal vector field $N\colon V \to S^2$,
namely, the normal vector $N_v$ at $v$ are different from the normal vector at any of its nearest neighboring vertices.

Let us introduce a {\it complex coordinate system}.
Namely, for each vertex $v$ in $V$, assign a~connected plane subgraph $U_v$ of $M$,
and a realization $z\colon U_v \to U$ from $U_v$ to a domain $U \subset \C$.

The notion of the holomorphic quadratic differentials is introduced in \cite{lam-pinkall} for a triangulation of a topological surface.
We imitate theirs to define a~holomorphic quadratic differential for a~discrete surface in our setting.

Let $q\colon E\to \C$ be a discrete function with a discrete $1$-form $\tau \colon E \to \C$,
i.e., $\tau(e_{ab}) = - \tau(e_{ba})$ for each oriented edge $e_{ab}$
which connects a vertex $v_a$ to a vertex $v_b$ and $e_{ba}$ its reverse edge, is uniquely defined by
\begin{gather*}
 q(e_{ab}) = \nai \tau(e_{ab}), \ii {\rm d}z(e_{ab}) \seki, \qquad
 0 = \nai \tau(e_{ab}), {\rm d}z(e_{ab}) \seki,
\end{gather*}
where ${\rm d}z(e_{ab}) = z(v_b)- z(v_a) \in \C$, and $\nai ~,~\seki$ is a complex innerproduct in $\C$.

From the definition, we see $\tau(e_{ab})= -\tau(e_{ba})$.
Actually $\tau$ is explicitly given as
\begin{gather*}
 q(e) = \ii \overline{\tau(e)}{\rm d}z(e).
\end{gather*}

\begin{Definition}[discrete holomorphic quadric differential]
 When a discrete function $q$ which satisfies
 \begin{gather*}
 \sum_{e \in E_v} q(e)=0 \qquad \text{for all}\quad v \in V
 \end{gather*}
 with the discrete $1$-form $\tau$ defined as above satisfies
 \begin{gather*}
 \sum_{e \in E_v} \tau(e)=0 \qquad \text{for all}\quad v \in V,
 \end{gather*}
 it is called a {\em holomorphic quadratic differential}.
\end{Definition}
It is easily checked that the notion is preserved under the M\"obius transformation in the coordinate $z$.
%%%%%%%%
\begin{Remark}
 The holomorphic quadratic differential $q$ can be written by using a harmonic function
 with respect to the cotangent Laplacian $\Delta_{\cot}$ (cf.\ \cite{pinkall-polthier}).
 More precisely, for a function~${u \colon V \to \R}$,
 define a $\C$-valued function $\lambda_{abc} $ for every triple $(a,b,c)$ such that
 \begin{gather*}
 \nai \lambda_{ijk}, {\rm d}z(e_{ab}) \seki = u_b -u_a, \qquad\text{where}\quad (a,b) = (i,j), (j,k),(k,i),
 \end{gather*}
 and take $\tau$ such that for a triple $T_{\mathrm{left}}$ and $T_{\mathrm{right}}$ of vertices
 which make a triangle left to the edge~$e$ and a triangle right to the edge $e$,
 \begin{gather*}
 \tau (e) = \lambda_{T_{\mathrm{left}}} -\lambda_{T_{\mathrm{right}}}.
 \end{gather*}
 Then the $q$ defined from the $\tau$ satisfies
 \begin{gather*}
 \sum_{e \in E_v} q(e) = \Delta_{\cot}u.
 \end{gather*}
 The $q$ is holomorphic if and only if $u$ is harmonic with respect to $\Delta_{\cot}$.
\end{Remark}

\section[Weierstrass representation formula for a discrete harmonic surface]{Weierstrass representation formula\\ for a discrete harmonic surface}

Given a topological trivalent graph \[M=(V,E) \subset \C\]
 with a complex coordinate $z$ of $U \subset M \subset \C$.
Now we are ready to give a Weierstrass representation with a pair of a holomorphic function $g$ in $z$
and a holomorphic quadratic differential $q$ on $M$ associated with $g$.

A {\em holomorphic quadratic differential associated with} $g$ is an extended notion, i.e., a function in the form of
\begin{gather*}
 q(e)= \ii \overline{\tau}(e) {\rm d}g(e)
\end{gather*}
with a discrete $1$-form $\tau$ satisfying
\begin{gather*}
 \sum_{e \in E_v} q(e) =0,
 \qquad
 \sum_{e \in E_v} \tau(e) =0 \qquad \text{for all}\quad v \in V.
\end{gather*}
This notion is preserved when $g$ is replaced by its linear fractional transformation.

Let us define $F\colon V \to \C^3$ by
\begin{align*}
 {\rm d}F(e_{ab}) &= \frac{\ii q(e_{ab})}{{\rm d}g(e_{ab})}(1-g_ag_b, \ii(1+g_ag_b), g_a+g_b)\\
 &=-\overline{\tau}(e_{ab}) (1-g_ag_b, \ii(1+g_ag_b), g_a+g_b),\qquad e_{ab} \in E,
\end{align*}
which connect the vertices $v_a$ and $v_b$ in $V$. Then we have the following lemma.

\begin{Lemma}
 \begin{equation}
 \label{eq:lemma:6:1}
 \frac{{\rm d}F(e_{ab})}{q(e_{ab})}
 = c_{ab}
 \left(
 N_a \times N_b +\ii(N_b -N_a)
 \right),
 \end{equation}
 where
 \begin{gather*}
 c_{ab} = \frac{\big(1+|g_a|^2\big)\big(1+|g_b|^2\big)}{|g_b-g_a|^2},
 \end{gather*}
 and
 for
 \begin{equation}
 \label{psuedonormal}
 N = \frac{1}{1+|g|^2} \big(g+ \overline{g}, \ii(\overline{g}-g), |g|^2-1\big).
 \end{equation}
\end{Lemma}
\begin{proof}
 The statement is obtained by simple calculations.
 The first term of the right-hand side of (\ref{eq:lemma:6:1}) $N_a \times N_b$ is parallel to the vector with the three elements.
 The first one
 \begin{align*}
 \ii\big[ &(\overline{g}_a-g_a)\big(|g_b|^2-1\big) -(\overline{g}_b-g_b)\big(|g_a|^2-1\big)
  =
 \ii\big[(\overline{g}_a\overline{g}_b -1)(g_b-g_a) -(g_ag_b-1)(\overline{g}_b -\overline{g}_a),
 \end{align*}
 the second one follows
 \begin{align*}
 &(\overline{g}_b+g_b)\big(|g_a|^2-1\big) -(\overline{g}_a+g_a)\big(|g_b|^2-1\big)\big]
 =
 -(\overline{g}_a\overline{g}_b +1)(g_b-g_a) -(g_ag_b+1)(\overline{g}_b -\overline{g}_a),
 \end{align*}
 and the third one follows
 \begin{align*}
 \ii&[ (\overline{g}_a+g_a)((\overline{g}_b-g_b) -(\overline{g}_b+g_b)(\overline{g}_a-g_a)]=\ii[(g_a+g_b)( \overline{g}_b-\overline{g}_a)]
 -(\overline{g}_a+\overline{g}_b )(g_b-g_a)].
 \end{align*}
 Thus we obtain
 \begin{align*}
 N_a \times N_b ={}&
 \frac{-\ii }{\big(1+|g_a|^2\big)\big(1+|g_b|^2\big)}
 \left[
 (g_b-g_a)
 (
 1-\overline{g}_a\overline{g}_b,
 -\ii(1+\overline{g}_a\overline{g}_b),
 \overline{g}_a+\overline{g}
 )
 \right.
 \\
 &
 \left.
 - (\overline{g}_b-\overline{g}_a)
 (
 1-g_ag_b,
 \ii(1+g_ag_b),
 g_a+g_b
 )
 \right]
 \\
 ={}&
 \frac{1}{(1+|g_a|^2)(1+|g_b|^2)}
 \left[
 (g_b-g_a) \frac{\overline{{\rm d}F(e_{ab})}{\rm d}\overline{g}(e_{ab})}{\overline{q}}
 +(\overline{g}_b-\overline{g}_a) \frac{{\rm d}F(e_{ab}) {\rm d} g(e_{ab})}{q}
 \right]
 \\
 ={}&c_{ab}^{-1}\big[\,\overline{{\rm d}F}/\overline{q}+ {\rm d}F/q\big](e_{ab}),
 \end{align*}
 where
 \begin{gather*}
 c_{ab} = \frac{\big(1+|g_a|^2\big)\big(1+|g_b|^2\big)}{|g_b-g_a|^2}.
 \end{gather*}
 Similarly, we compute $N_b-N_a$ with three elements.
 The first one
 is
 \begin{align*}
 &\frac{\big[(g_b+\overline{g}_b) \big(1+|g_a|^2\big) - (g_a+\overline{g}_a) \big(1+|g_b|^2\big)\big]}{\big(1+|g_a|^2\big)\big(1+|g_b|^2\big)}
 =
 \frac{[(1-\overline{g}_a\overline{g}_b)(g_b-g_a) + (1-g_ag_b)(\overline{g}_b-\overline{g}_a)]}{\big(1+|g_a|^2\big)\big(1+|g_b|^2\big)},
 \end{align*}
 the second one follows
 \begin{align*}
 &\frac{\ii\big[(\overline{g}_b-g_b) \big(1+|g_a|^2\big) - (\overline{g}_a-g_a) \big(1+|g_b|^2\big)\big]}{\big(1+|g_a|^2\big)\big(1+|g_b|^2\big)}\\
 &\qquad=
 \frac{\ii[-(1+\overline{g}_a\overline{g}_b)(g_b-g_a) + (1+g_ag_b)(\overline{g}_b-\overline{g}_a)]}{\big(1+|g_a|^2\big)\big(1+|g_b|^2\big)},
 \end{align*}
 the third one follows
 \begin{align*}
 &\frac{\big[\big(|g_b|^2-1\big)\big(|g_a|^2+1\big) \!-\!\big(|g_a|^2-1\big)\big(|g_b|^2+1\big)\big]}{\big(1+|g_a|^2\big)\big(1+|g_b|^2\big)}
 =
 \frac{[(g_a+g_b) (\overline{g}_b-\overline{g}_a) + (\overline{g}_a+\overline{g}_b) (g_b-g_a) ]}{\big(1+|g_a|^2\big)\big(1+|g_b|^2\big)}.
 \end{align*}
 Namely, by putting
 \begin{gather*}
 c_{ab}(N_b-N_a) :=\ii \big(\overline{{\rm d}F}/\overline{q}- {\rm d}F/q\big)(e_{ab}),
 \end{gather*}
 we show
 \[
 c_{ab}[N_a \times N_b +\ii (N_b-N_a)]
 = \big(\overline{F}/\overline{q}+ F/q\big)- \big(\overline{{\rm d}F}/\overline{q}- {\rm d}F/q\big)(e_{ab})
 = 2{\rm d}F(e_{ab})/q(e_{ab}).\tag*{\qed}
 \]\renewcommand{\qed}{}
\end{proof}

Now we check the balancing condition (\ref{eq:balancing}).
Let $v_1$, $v_2$, and $v_3$ be the neighboring vertices of a vertex $v_0$,
and $e_{0a}$ be the oriented edge connecting $v_0$ to $v_a$ with $a=1,2,3$.

Note
 \begin{align*}
 {\rm d}F(e_{0a}) &= - \overline{\tau}(e_{0a}) (1-g_0g_a, \ii(1+g_0g_a), g_0+g_a) \\
 &=- \overline{\tau}(e_{0a})\big(1-g_0^2 -g_0(g_a-g_0), \ii\big(1+g_0^2 +g_0(g_a-g_0)\big), 2g_0 +g_a-g_0\big) \\
 &=- \overline{\tau}(e_{0a})\big(1-g_0^2, \ii(1+g_0^2), 2g_0\big)
 + \ii \overline{\tau}(e_{0a})(g_a-g_0)g_0(\ii, -1, -\ii)\\
 &= - \overline{\tau}(e_{0a})\big(1+|g_0|^2\big)N
 +q(e_{0a})g_0(\ii, -1, -\ii).
 \end{align*}
The balancing condition
\begin{gather*}
 \sum_{a=1,2,3} \Re({\rm d}F(e_{oa})) = 0
\end{gather*}
follows from
\begin{gather*}
 \sum_{a=1,2,3}q(e_{0a})=0, \qquad \sum_{a=1,2,3}\tau(e_{0a}) =0.
\end{gather*}

%%%%%
\begin{Theorem}[Weierstrass representation formula for a discrete harmonic surface]
 \label{claim:weierstrass}
 Given a~trivalent graph $M=(M,V,E)$ and its complex coordinate $z \colon U \subset M \to \C$,
 with a planar part $U$ of $M$.
 For simplicity, we consider $M \subset \C$.
 For a holomorphic function $g$ in $z$ on $\C$ and a~holomorphic quadratic differential $q$ on a $M$ associate with $g$,
 let us define a map $X\colon M \to \R^3$
 \begin{gather*}
 X = \int \Re{\rm d}F,
 \end{gather*}
 with
 \begin{align}
 {\rm d}F(e_{ab}) &= \frac{\ii q(e_{ab})}{{\rm d}g(e_{ab})}(1-g_ag_b, \ii(1+g_ag_b), g_a+g_b)\nonumber\\
 &=-\overline{\tau}(e_{ab}) (1-g_ag_b, \ii(1+g_ag_b), g_a+g_b),\qquad e_{ab} \in E, \label{eq:weierstrass:1}
 \end{align}
which connect the vertices $v_a$ and $v_b$ in $V$. It defines a discrete harmonic surface in~$\R^3$.
 The above equation is called Weierstrass representation formula for a discrete harmonic surface.
\end{Theorem}

It should be noted the vector field $N$ defined by (\ref{psuedonormal}) is not a normal vector field unless $q$ is real valued.
We call it {\em pseudo normal} to the discrete harmonic surface.
The constructed surfaces are not always
closed globally. It would be interesting to study monodromies of the surfaces.
However, we may construct discrete Enneper-like harmonic surfaces (see Section~\ref{sec:numerics}).

\section[Convergence of discrete harmonic surfaces to a minimal surface]{Convergence of discrete harmonic surfaces\\ to a minimal surface}

In this section, we construct a sequence of discrete harmonic surfaces expressed with
the Weierstrass representation formula (\ref{eq:weierstrass:1}) with two holomorphic data $g$ and $q$,
and show its smooth convergence to a classical minimal surface with its Weierstrass date $g$ and $q$.
Key to the proof is to construct such a sequence with the ``common'' $g$ and $q$ on $\C$.

To illustrate our idea, let us consider a sequence of discrete harmonic surfaces $X_{\lambda_n} \colon M_{\lambda_n} \to \R^3$ via
the Weierstrass representation formula with common holomorphic function $g$ and holomorphic quadratic differential $q$ associated with $g$,
where $M_{\lambda_n}$ is a regular hexagonal lattice on $\C$ with edge length $\lambda_n > 0$ obtained by homothety
with multiplier $\lambda_n$ of the initial one.
Because the Weierstrass representation formula is a discrete approximation of the classical one, it is straightforward to show the following.

\begin{Theorem} \label{homothety}
Let us denote the discrete harmonic surface $X_{\lambda_n}$ constructed as above by~$X_n$.
The sequence $\{X_n\}$ converges to a $($classical$)$ closed minimal surface
$X_{\infty}\colon M_{\infty} \to \R^3$ when $\lambda_n \to 0$, and the mean curvature and the Gauss curvature of $X_n$ smoothly converge to that of the limit surface.
\end{Theorem}

\begin{proof}It is easy to see the sequence is a Cauchy sequence in the Hausdorff topology and have a minimal surface
$X_{\infty}\colon M_{\infty} \to \R^3$ as its limit which is defined by the classical Weierstrass representation formula with $g$ and $q$.

Due to
\begin{gather*}
 \nai N_a, {\rm d}F(e_{ab}) \seki = \overline{\tau}(e_{ab})(z_a-z_b) \qquad ( \text{for two adjacency vertices $v_a$ and $v_b$}),
\end{gather*}
the pseudo Gauss map $N$ converges to the Gauss map (unit normal vector field) of a limit minimal surface
because the distance between two adjacency vertices in $\C$ goes to zero as $n$ goes to the infinity.
Now we recall the definitions of the first fundamental form and the second fundamental form
for a discrete surface $X_n$ and a unit vector field $N$ (not necessarily the normal vector) are
difference analog of the classical one,
they converge to that of $X_{\infty}$ with the unit normal vector $N$,
and therefore the mean curvature and the Gauss curvature converges to that of the mean curvature and the Gauss curvature of the limit surface.
\end{proof}

Now let us construct a sequence of trivalent discrete harmonic surfaces $X_n\colon M_n \to \R^3 $ with~$M_0 =M$ given by the Weierstrass representation formula with common $g$ and $q$, and its convergence.
For the purpose, we have $M=(M,V,E)$ a planar trivalent graph with a~complex coordinate $z \in \C$. For simplicity, we consider $M$ embedded in $\C$.

For a given discrete harmonic surface $X\colon M \to \R^3$,
we have a sequence of topological discrete surfaces $M_n$ by the Goldberg--Coxeter subdivisions in $\C$.
In this section, we take twice of this process to construct $M_n$ from $M_{n+1}$ so that it looks like a homothety of multiplier $1/4$
in each subdivision step and all faces (polygonals) in $M_n$ become the homothetic polygons multiplied with $1/4$ in $M_{n+1}$.

Given a holomorphic function $g$ on $\C$, a holomorphic quadratic differential $q_n$ of each $M_n$ is uniquely determined
by $g$ up to scalar multiplicity.
Because the Goldberg--Coxeter subdivisions are homothety and $M_n$ converges to $\C$ in the Hausdorff topology
(i.e., $\bigcup M_n$ is dense in $\C$),
we can construct $q_n$ consistently and a differential form $q$ on $\C$ so that $q_n$ is a restriction of $q$.
The classical holomorphicity of $q$ follows from the discrete holomorphicity.

When we take the Goldberg--Coxeter subdivision,
then $\lambda_n = 4^{-n} \lambda$ and then by similar argument in Theorem~\ref{homothety}, we have the statement.

\begin{Corollary} \label{claim:homothety:cor}
 For a given discrete harmonic surface with holomorphic data $g$ and $q$,
 we construct a sequence of discrete harmonic surfaces which smoothly converges to a classical minimal surface
 with the holomorphic data~$g$ and~$q$.
\end{Corollary}

\section{Trivalent Enneper surface as numerical examples}\label{sec:numerics}

Given a regular hexagonal lattice in $\C =(\C, z)$,
the Weierstrass representation with a pair of a~holomorphic function $g$ on $\C$ and a~holomorphic quadratic differential $q$ defined on $E$ associate with $g$ is given by Theorem \ref{claim:weierstrass}.

In this section, as an application,
we study the convergence of the sequence of discrete surfaces~$X_n$ which are obtained
by subdivision of a given discrete surface $X_0$ with the Weierstrass data $q$ and $g$.
\begin{Lemma}
 \label{claim:lemma:8:1}
 Let $X\colon  (M,V,E) \to \R^3$ be a trivalent graph in $\R^3$,
 The holomorphic quadratic form $q$ with respect to $g$ is determined uniquely up scalar multiple.
\end{Lemma}

\begin{proof}
 Take a vertex $v_0 \in V$ and its nearest neighboring vertices $v_1$, $v_2$, and $v_3$.
 Their image in $\R^3$ are denoted by $x_0 =X(v_0)$ and $x_a =X(v_a)$ with $a=1,2,3$.
 Let $q_a= q(e_{0a})$, and~${c_a= g(x_a)-g(x_0)}$.
 Then the conditions for $q$ to be a holomorphic quadratic differentials is given as two linear equations:
 \begin{gather*}
 q_1+q_2+q_3=0,\qquad
 \frac{q_1}{c_1} + \frac{q_2}{c_2} + \frac{q_3}{c_3}=0.
 \end{gather*}
 The solution to them can be expressed
 \[% \label{eq:sect8:q}
 q_1= \nu c_1(c_3-c_2),
 \qquad
 q_2= \nu c_2(c_1-c_3),
 \qquad
 q_3= \nu c_3(c_2-c_1),
 \]
 with an arbitrary scalar $\nu \in \C$.
\end{proof}

Now we compute {\em trivalent Enneper surface} and their convergence.
Let $X = (V, E)$ be a~regular hexagonal lattice whose edge length $\lambda > 0$ in $B_R(0) \subset \C$.
and a holomorphic function $g$ be $g(z) = z$.
Then by Lemma \ref{claim:lemma:8:1}, we may calculate the holomorphic quadratic differential $q$ as
 \begin{align}
 q_1 &= \nu (g(z_1) - g(z))(g(z_2) - g(z_3)), \qquad
 q_2 = \nu (g(z_2) - g(z))(g(z_3) - g(z_1)), \nonumber\\
 q_3 &= \nu (g(z_3) - g(z))(g(z_1) - g(z_2)),  \label{eq:numerical:q}
 \end{align}
for an arbitrary scalar $\nu \in \C$.
By using the Weierstrass formula (\ref{eq:weierstrass}) in Theorem \ref{claim:weierstrass}, we obtain
the local formula
\begin{equation}
 \label{eq:enneper}
 \begin{aligned}
 F(z_a)
 &=
 F(z)
 +
 \frac{\ii q_a}{g(z_a) - g(z)}
 \begin{bmatrix}
 1-g(z_a)g(z)
 \\
 \ii(1+g(z_a)g(z))\\
 g(z_a) + g(z)
 \end{bmatrix}
 \in \C^3.
 \end{aligned}
\end{equation}
Taking $F(z_0) = w_0 \in \C^3$ for a fixed vertex $z_0 \in V \subset \C$ and a fixed $w_0 \in \C^3$.
we may calculate~$F(z)$ using \eqref{eq:enneper} along a path from $z_0$ to $z \in V \subset \C$.
By defining $X(z) = \Re(F(z))$,
we call the surface $X$ a {\em trivalent Enneper surface} (see Figure \ref{fig:enneper}).

For the well-definedness of $X$,
we may show that the formula \eqref{eq:enneper} is closed along any closed path in the hexagonal lattice.
\begin{Lemma}
 For any $z_1, \ldots, z_5$, and $z_6$ be a closed path in a regular hexagonal lattice in $\C$,
 the formula {\rm \eqref{eq:enneper}} is closed along the path.
\end{Lemma}

\begin{figure}[t]
 \centering
 \includegraphics[bb=0 0 138 139,width=138pt]{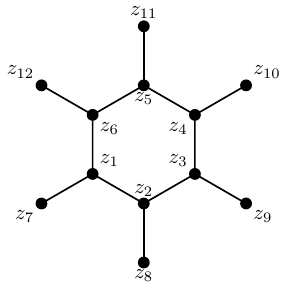}
 \caption{} \label{fig:hex_closed}
\end{figure}

\begin{proof}
 Let coordinates of $z_1, \ldots, z_6$ and $z_7, \ldots, z_{12}$ be
 \begin{align*}
 z_j = \lambda {\rm e}^{(j-1)\pi \ii/3 } + z_0,
 \qquad j = 1, \ldots, 6,
 \qquad
 z_k = 2 \lambda {\rm e}^{(k-7) \pi\ii/3} + z_0,
 \qquad
 k = 7, \ldots, 12,
 \end{align*}
 see Figure~\ref{fig:hex_closed}.
 By \eqref{eq:numerical:q} and \eqref{eq:enneper}, we obtain
 \begin{align}
 {\rm d}F(e_{j, j+1})
 &=
 F(z_{j+1})
 -
 F(z_j)
 =
 \ii (z_{j+6} - z_{j-1})
 \begin{bmatrix}
 1- z_j z_{j+1}, \\
 \ii(1+ z_j z_{j+1}) \\
 z_j + z_{j+1}
 \end{bmatrix}, \label{eq:enneper:2}
 \end{align}
 for $j = 1, \ldots, 6$,
 (in case of $j = 1$, set $j-1$ to be $6$).
 Substituting explicit complex coordinates into (\ref{eq:enneper:2}),
 we obtain
 \begin{align*} %\label{eq:enneper:3}
 {\rm d}F(e_{j, j+1})
 &=
 \ii \lambda \big(2 {\rm e}^{(j-1) \pi \ii/3} - {\rm e}^{(j-2) \pi \ii/3}\big)
 \begin{bmatrix}
 1-\lambda^2 \big({\rm e}^{(j-1) \pi \ii/3} + z_0\big)\big({\rm e}^{j \pi \ii/3} + z_0\big)\vspace{1mm}\\
 \ii\big(1+ \lambda^2 ({\rm e}^{(j-1) \pi \ii/3} + z_0\big)\big({\rm e}^{j \pi \ii/3} + z_0\big)\big) \vspace{1mm}\\
 \lambda \big({\rm e}^{(j-1) \pi \ii/3} + {\rm e}^{j \pi \ii/3} + 2 z_0\big)\\
 \end{bmatrix}
 \\
 &=
 \sqrt{3} {\rm e}^{(j+1)\pi\ii/3}
 \begin{bmatrix}
 \left(1-\lambda^2 \left(z_0+{\rm e}^{\ii \pi (j-1)/3}\right) \left(z_0+{\rm e}^{j \pi \ii/3}\right)\right)
 \vspace{1mm}\\
 \ii \left(1+\lambda^2 \left(z_0+{\rm e}^{\ii \pi (j-1)/3}\right) \left(z_0+{\rm e}^{j \pi \ii/3}\right)\right)
 \vspace{1mm}\\
 \lambda \left(2 z_0-\sqrt{3} {\rm e}^{-\pi \ii/6} {\rm e}^{j \pi \ii/3}\right)
 \end{bmatrix},
 \end{align*}
 and
 \[
 {\rm d}F(z_{1, 2}) + {\rm d}F(z_{2, 3}) + {\rm d}F(z_{3, 4}) + {\rm d}F(z_{4, 5}) + {\rm d}F(z_{5, 6}) + {\rm d}F(z_{6, 1}) = 0 \in \C^3.\tag*{\qed}
 \]\renewcommand{\qed}{}
\end{proof}

By taking Goldberg--Coxeter subdivision of the regular hexagonal lattice $M$,
we obtain a~regular hexagonal lattice $M_1$.
Constructing the same manner from $M_1$, we may obtain a trivalent Enneper surface $X_1$.
Continuing this procedure,
we obtain a sequence $\{X_j\}_{j=0}$ of trivalent Enneper surfaces (see Figure~\ref{fig:enneper}).

\begin{figure}[t] \centering
 \begin{tabular}{ccc}
 $M_0$
 &$M_1$
 &$M_2$
 \\
 \includegraphics[bb=0 0 150 145,width=100pt]{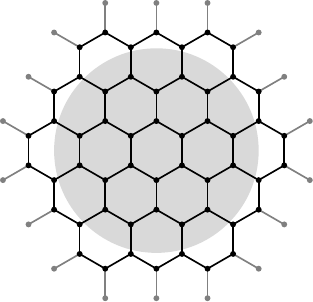}
 &\includegraphics[bb=0 0 126 137,width=100pt]{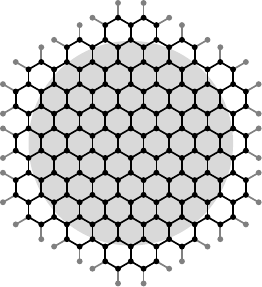}
 &\includegraphics[bb=0 0 113 113,width=100pt]{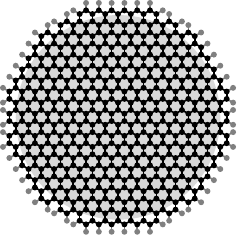}
 \end{tabular}
 \caption{Regular hexagonal lattices $M_k$ to construct trivalent Enneper surfaces $X_k$.
 Gray disks express~${B_{\sqrt{3}}(0) \subset \C}$.
 Note that classical Enneper surface from $B_{R}(0)$ have exactly self-intersections if~${R \ge \sqrt{3}}$.} \label{fig:lattice}
\end{figure}

\begin{figure}[t] \centering
 \begin{tabular}{cc}
 $M_0$ and $M_1$
 &$M_1$ and $M_2$
 \\
 \includegraphics[bb=0 0 149 143,width=100pt]{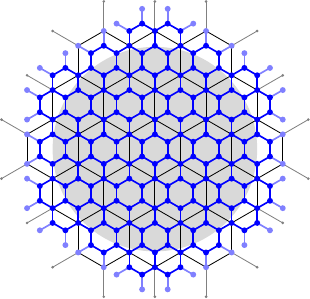}
 &\includegraphics[bb=0 0 124 136,width=100pt]{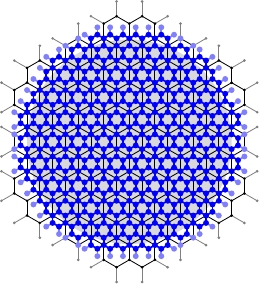}
 \end{tabular}
 \caption{Left: Goldberg--Coxeter construction from $M_0$ (black) to $M_1$ (blue).
 Right: Goldberg--Coxeter construction from $M_1$ (black) to $M_2$ (blue).} \label{fig:lattice1}
\end{figure}

We compute that the sequence $\{X_j\}$ converges a classical Enneper surfaces $X_{\mathrm{classical}}$,
Gauss maps and mean curvatures of $\{X_j\}$ converge the Gauss map and the mean curvatures of $X_{\mathrm{classical}}$, respectively
(see Figures~\ref{fig:lattice} and \ref{fig:lattice1}).
In fact, the Enneper surface $X_{\mathrm{classical}}$ appeared in the limit is
 \begin{align*}
 X= \sqrt{3}\nu \big(u^3/3 - u^2 v + u\big), \qquad
 Y= \sqrt{3}\nu \big(-v^3/3 + v^2 u - v\big), \qquad
 Z = \sqrt{3}\nu \big(u^2 - v^2\big), \end{align*}
where $\nu$ is the scalar used in \eqref{eq:numerical:q}.
We show numerical results of convergence
 \[
 X_k(z_j) - X_{\mathrm{classical}}(z_j),
 \qquad
 N_k(z_j) - N(z_j),
 \qquad
 H_k(z_j) - H(z_j)
 \]
in Figure~\ref{fig:converge}.

\begin{figure}[t]
 \centering
 \begin{tabular}{ccccc}
 $X_0$
 &$X_1$
 &$X_2$
 \\
 \includegraphics[bb=0 0 480 512,width=70pt]{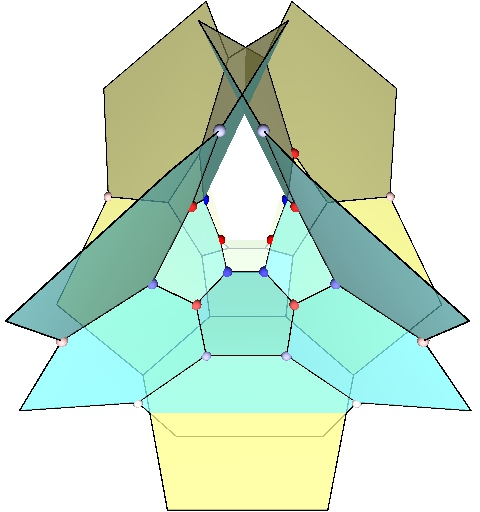}
 &\includegraphics[bb=0 0 576 496,width=70pt]{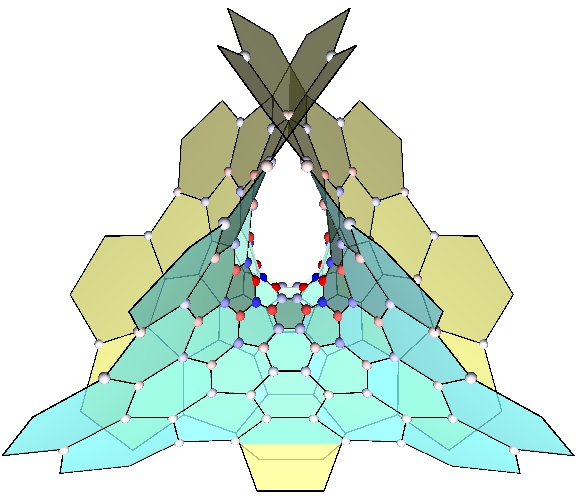}
 &\includegraphics[bb=0 0 400 400,width=70pt]{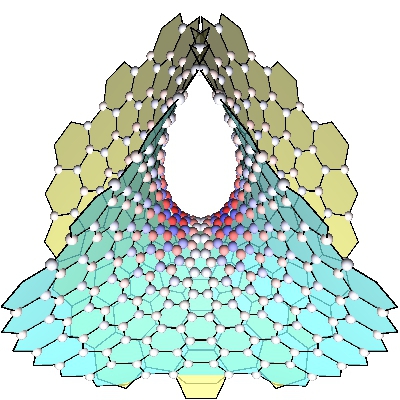}
 \\
 \includegraphics[bb=0 0 512 346,width=70pt]{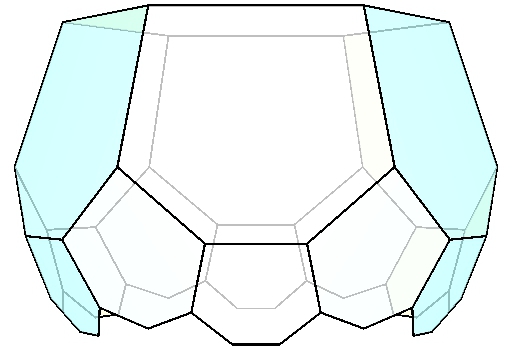}
 &\includegraphics[bb=0 0 512 416,width=70pt]{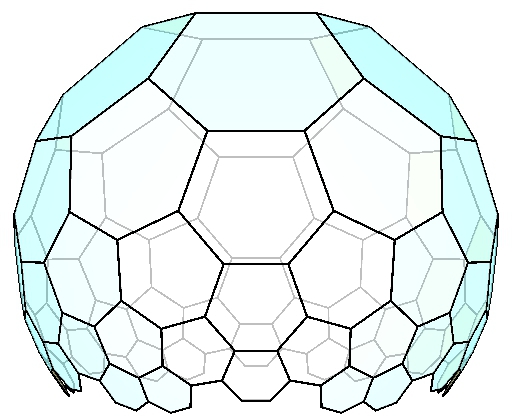}
 &\includegraphics[bb=0 0 512 416,width=70pt]{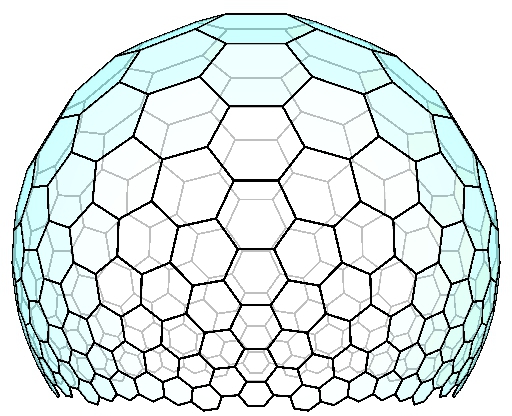}
 \\
 \mbox{}
 \\
 $X_3$
 &$X_4$
 &$X_5$
 \\
 \includegraphics[bb=0 0 416 381,width=70pt]{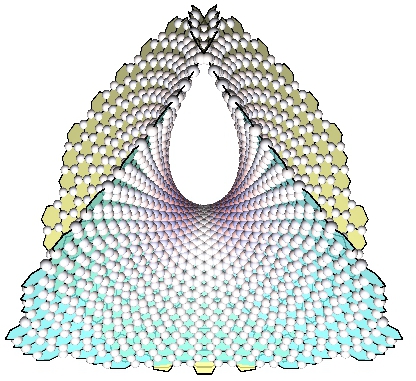}
 &\includegraphics[bb=0 0 384 360,width=70pt]{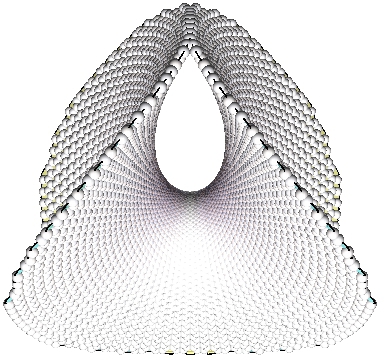}
 &\includegraphics[bb=0 0 368 352,width=70pt]{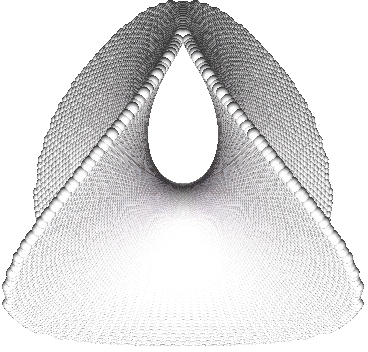}
 \\
 \includegraphics[bb=0 0 512 472,width=70pt]{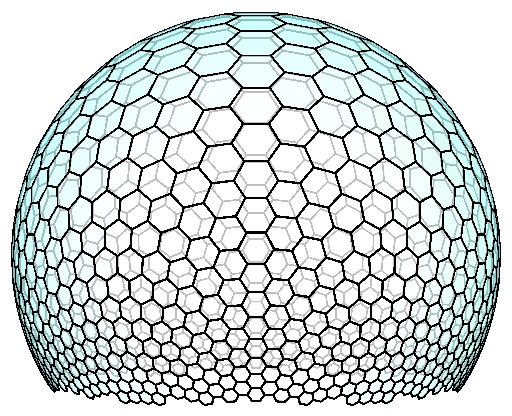}
 &\includegraphics[bb=0 0 512 472,width=70pt]{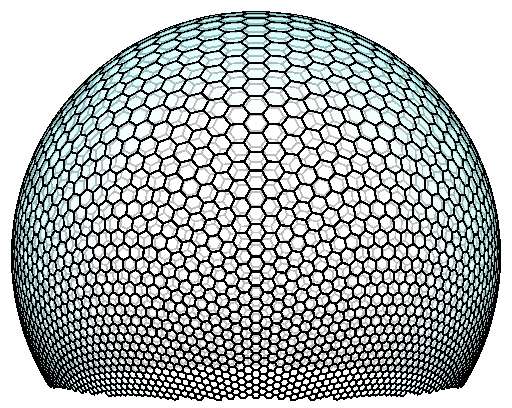}
 &\includegraphics[bb=0 0 512 472,width=70pt]{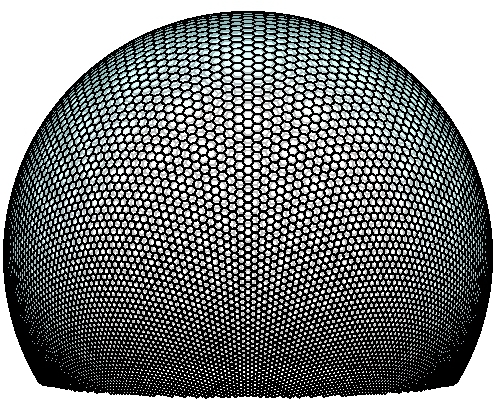}
 \end{tabular}
  \vspace{-1mm}

 \caption{Upper row:
 Trivalent Enneper surface $X_k$ (with $\nu = 1$ in holomorphic quadratic differential on $B_{\sqrt{3}}(0) \subset \C$),
 where $X_k$ is the $(k-1)$-times Goldberg--Coxeter subdivision of $X_1$.
 Vertex coloring are proportional to mean curvature (the larger the absolute value, the darker the color).
 Lower row: Their pseudo normal vector (pseudo Gauss map).} \label{fig:enneper}
\end{figure}
\begin{figure}[th!]
 \centering
 \includegraphics[bb=0 0 360 216,width=220pt]{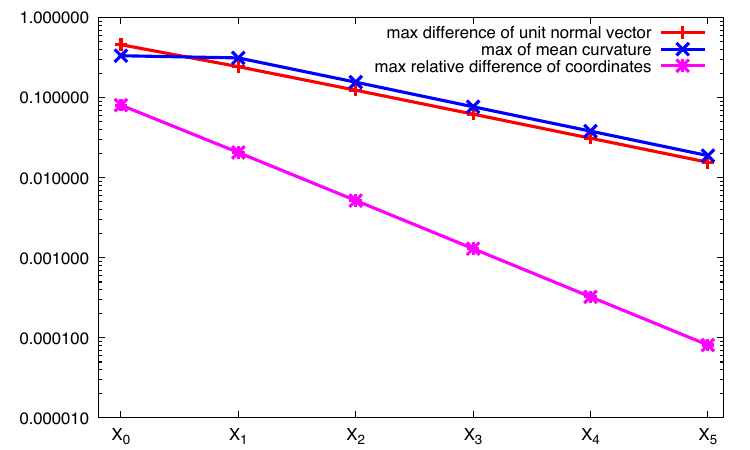}

 \vspace{-1mm}

 \caption{Blue: maximum values of difference of mean curvatures between $X_k$ and $X_{\mathrm{classical}}$.
 Red: maximum values of norm of difference between the Gauss map of $X_k$ and the Gauss map of $X_{\mathrm{classical}}$.
 Magenta: maximum difference of $X_k$ and $X_{\mathrm{classical}}$. }
 \label{fig:converge}
\end{figure}

\subsection*{Acknowledgements}
Motoko Kotani acknowledges the JSPS Grant-in-Aid for Scientific Research (B) under Grant No.~JP23H01072.
Hisashi Naito acknowledges the JSPS Grant-in-Aid for Scientific Research (C) under Grant No.~JP19K03488, No.~JP24K06710, and
the JSPS Grant-in-Aid for Scientific Research (B) under Grant No.~JP23H01072.
The authors would like to express their gratitude to the referee for its useful comments.

\pdfbookmark[1]{References}{ref}
\LastPageEnding

\end{document}